\newtheorem{theorem}{Theorem}[section]
\newtheorem{lemma}[theorem]{Lemma}
\newtheorem{proposition}[theorem]{Proposition}
\theoremstyle{definition}
\newtheorem{definition}[theorem]{Definition}
\newtheorem{example}[theorem]{Example}
\theoremstyle{remark}
\newtheorem{remark}[theorem]{Remark}
\numberwithin{equation}{section}
\begin{document}
\title{Average length of the longest $k$-alternating subsequence}
\author{Tommy Wuxing Cai}
\address{School of Sciences,
South China University of Technology, Guangzhou 510640, China}
\email{caiwx@scut.edu.cn}
\keywords{permutation, alternating sequence}
\subjclass[2010]{05A15}

\begin{abstract}
We prove a conjecture of Drew Armstrong on the average maximal length of $k$-alternating subsequence of permutations. The $k=1$ case is a well-known result of Richard Stanley.
\end{abstract}
\maketitle
\section{Introduction}
We fix positive integers $n,k$ with $n\geq2$ and $1\leq k\leq n-1$.

Let $w=w_1w_2\dotsm w_n$ in $\mathfrak{S}_n$, the permutation group of $[1,n]$. A subsequence $w_{i_1}\dotsm w_{i_s}$ of $w$ is \emph{alternating} if $w_{i_1}>w_{i_2}<w_{i_3}\dotsm$. We call it \emph{$k$-alternating} if moreover each neighboring pair satisfies $|w_{i_j}-w_{i_{j+1}}|\geq k$. We call the maximal length (which is the number of elements) of the $k$-alternating subsequences of $w$ the \emph{$k$-alternating length} of $w$ and denote it as $as_k(w)$ \cite{Arm}. We denote the average of the $k$-alternating length of permutations in $\mathfrak{S}_n$ by $E_n(as_k)$; i.e., $E_{n}(as_k)=\frac{1}{n!}\sum_{w\in \mathfrak{S}_n}as_k(w)$. We prove the following result which was conjectured by Drew Armstrong \cite{Arm}:

\begin{theorem}\label{T:main}
For integers $n,k$ with $n\geq k+1\geq2$, the average $k$-alternating length of permutations in $\mathfrak{S}_n$ is
\begin{equation}\label{F:averagekalternating}
E_{n}(as_k)=\frac{4(n-k)+5}{6}.
\end{equation}
\end{theorem}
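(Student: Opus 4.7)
The plan is to imitate Stanley's proof of the $k=1$ case, where the pointwise identity
\begin{equation*}
as_1(w) = 1 + \mathbf{1}[w_1 > w_2] + \sum_{i=2}^{n-1}\bigl(\mathbf{1}[w_i\text{ is a peak}] + \mathbf{1}[w_i\text{ is a valley}]\bigr)
\end{equation*}
expresses the alternating length as a sum of local indicators on three consecutive positions. Each peak or valley indicator has expectation $1/3$ (since a distinguished element of a random triple is the max, min, or median with probability $1/3$ each), the boundary indicator has expectation $1/2$, and summing gives $1 + 1/2 + 2(n-2)\cdot\tfrac{1}{3} = (4n+1)/6$. Rewriting the target of Theorem~\ref{T:main} as $\tfrac{3}{2} + 2(n-k-1)\cdot\tfrac{1}{3}$, the structure of the desired proof is clear: one wants the same constant and boundary contributions, but now $2(n-k-1)$ interior indicators of mean $1/3$ tailored to the $k$-gap requirement.

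The three steps I would carry out are: (1) prove an exact pointwise identity of the form $as_k(w) = 1 + \mathbf{1}[\mathcal{B}(w)] + \sum_\alpha X_\alpha(w)$, with $\mathcal{B}$ a boundary event of mean $1/2$ and the $X_\alpha$ indexed by a $\mathfrak{S}_n$-symmetric set of cardinality $2(n-k-1)$ whose indicators each have mean $1/3$; (2) compute these expectations by elementary uniform-permutation counting on a bounded substructure (the $1/3$ arising once again from the three equiprobable roles of a distinguished element in a random triple); (3) sum everything to obtain $(4(n-k)+5)/6$.

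The principal obstacle is step (1). Because a maximum $k$-alternating subsequence of $w$ may skip positions, the local peak/valley characterization on three consecutive positions from Stanley's proof cannot be transplanted as is; indeed $as_1(w) - as_k(w)$ is unbounded in $n$ (for $k \ge 2$, take $w = 2,1,4,3,6,5,\dots$), which rules out any naive truncation of Stanley's indicators. The most natural remedy is to reindex the interior indicators by triples of values rather than by triples of positions: for each value $v$ in a suitable range of cardinality $n-k-1$, introduce peak and valley indicators that record the order in which the elements of a prescribed $k$-gap value-triple containing $v$ appear in $w$. Verifying that the sum of such value-based indicators, together with the boundary term and the constant, telescopes to $as_k(w)$ for every $w$ is the heart of the proof; once the structural identity is established, the expectation computation and summation are routine.
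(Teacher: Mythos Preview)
Your proposal is not a proof: you correctly flag step~(1)---the pointwise identity---as the heart of the matter, and then leave it unproved. The instinct to pass from position-indexed to value-indexed indicators is right and is exactly what the paper does, but the specific ansatz you suggest (a family of $2(n-k-1)$ value-triple indicators each of mean $1/3$) is not what actually occurs, and you give no construction showing it can be made to work.

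The paper's realization is as follows. It defines, for each $w$, a notion of $k$-peak and $k$-valley (the endpoints of maximal $k$-ascending and $k$-descending sections of $w$), proves that the $k$-peaks and $k$-valleys taken in order form a longest $k$-zigzagging subsequence (this is the structural identity you are missing, stated for $zs_k$ rather than $as_k$; the two averages differ by $1/2$), and then computes the probability $p_{n,k}(j)$ that the \emph{value} $j$ is a $k$-peak. These probabilities are \emph{not} uniformly $1/3$: one has $p_{n,k}(j)=0$ for $j\le k$ and
\[
p_{n,k}(j)=\frac{(j-k)(j-k+1)}{(n-k)(n-k+1)}\qquad(j>k),
\]
obtained via an insertion argument (first lay down $\{1,\dots,j-k\}\cup\{j\}$, then insert $j+1,j+2,\dots,n$ one at a time, requiring that none land adjacent to $j$; the resulting product telescopes). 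The constant $1/3$ you are chasing appears only after summing $\sum_j p_{n,k}(j)$, not term by term. Thus the ingredients you would still need to supply are a workable definition of $k$-peak, the proof that the $k$-peaks and $k$-valleys realize $zs_k(w)$ exactly, and the (value-dependent) probability computation---none of which reduces to the three-element symmetry underlying Stanley's $k=1$ argument.
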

The special case when $k=1$ is a result of Stanley \cite{S1,S2}. Igor Pak and Robin Pemantle proved that $E_n(as_k)$ is asymptotically $2(n-k)/3$ using a probabilistic method \cite{PP}.

We call a subsequence satisfying $w_{i_1}<w_{i_2}>w_{i_3}\dotsm$ \emph{reverse alternating}.  We say a subsequence is \emph{zigzagging} if it is either alternating or reverse alternating. Then we similarly define a \emph{$k$-zigzagging subsequence} and the \emph{$k$-zigzagging length} $zs_k(w)$. We denote the average $k$-zigzagging length of permutations in $\mathfrak{S}_n$ by $E_n(zs_k)$.

Note that the \emph{swapping map} $I:w_1w_2\dotsm w_n\rightarrow (n+1-w_1)(n+1-w_2)\dotsm (n+1-w_n)$ is an involution interchanging alternating subsequences and reverse alternating subsequences. Thus exactly half of the permutations $w\in\mathfrak{S}_n$ have $k$-zigzagging length that is one more than their $k$-alternating length, while for the other half the two lengths are equal. Therefore $E_{n}(zs_k)=E_{n}(as_k)+1/2$. Hence  we have:

\begin{lemma}\label{L:alternatingSimilar2zigzagging}
The formula \eqref{F:averagekalternating} is equivalent to the formula
 \begin{equation}\label{F:averagekzigzagging}
 E_{n}(zs_k)=\frac{2(n-k)+4}{3}.
 \end{equation}
\end{lemma}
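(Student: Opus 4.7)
The plan is to establish the identity $E_n(zs_k) = E_n(as_k) + \tfrac{1}{2}$, from which the stated equivalence follows by the arithmetic check $\tfrac{2(n-k)+4}{3} - \tfrac{4(n-k)+5}{6} = \tfrac{1}{2}$. This identity is exactly the informal content of the paragraph preceding the lemma, so the task reduces to formalizing that argument.

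First I introduce $rs_k(w)$, the length of the longest $k$-reverse-alternating subsequence of $w$, so that $zs_k(w) = \max(as_k(w), rs_k(w))$. The swap $I$ bijects $k$-alternating subsequences of $I(w)$ with $k$-reverse-alternating subsequences of $w$, giving $rs_k(w) = as_k(I(w))$. Since $I$ is fixed-point-free on $\mathfrak{S}_n$ for $n \geq 2$, it partitions $\mathfrak{S}_n$ into $n!/2$ orbits of size $2$. A brief truncation argument (drop the first element of a longest $k$-alternating subsequence to produce a $k$-reverse-alternating subsequence one shorter, and symmetrically) shows $|as_k(w) - rs_k(w)| \leq 1$.

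The crux is to rule out equality, namely to show that $A^0 := \{w \in \mathfrak{S}_n : as_k(w) = rs_k(w)\}$ is empty. Granted this, in each orbit $\{w, I(w)\}$ one permutation satisfies $as_k > rs_k$ (so $zs_k = as_k$) and the other satisfies $rs_k > as_k$ (so $zs_k = as_k + 1$), hence each orbit contributes exactly $1$ to $\sum_{w}(zs_k(w) - as_k(w))$. Summing over the $n!/2$ orbits then yields $E_n(zs_k) - E_n(as_k) = 1/2$.

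The main obstacle is therefore proving $A^0 = \emptyset$. For $k = 1$ this is the classical extremum dichotomy: the local extrema of $w$, together with both endpoints, form a single zigzag subsequence realizing $zs_1(w)$, and the sign of $w_1 - w_2$ dictates whether it is alternating or reverse-alternating, forcing $|as_1 - rs_1| = 1$. For general $k$ I expect a parallel argument based on a canonical maximum $k$-zigzag subsequence whose orientation is forced by a suitable $k$-analogue of the extrema of $w$; the truncation of that canonical sequence realizes one of $as_k(w), rs_k(w)$ and its one-step shortening realizes the other.
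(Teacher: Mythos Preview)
Your approach is exactly the paper's: establish $E_n(zs_k)=E_n(as_k)+\tfrac12$ via the involution $I$ and then check the arithmetic. You have in fact been more careful than the paper, which simply asserts that ``exactly half of the permutations $w\in\mathfrak S_n$ have $k$-zigzagging length that is one more than their $k$-alternating length, while for the other half the two lengths are equal'' and deduces the $+\tfrac12$ from that. Your reduction to the truncation bound $|as_k-rs_k|\le 1$ together with $A^0=\emptyset$ is precisely what that sentence is claiming.

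The only gap you leave open is the same one the paper leaves open at this point, namely $A^0=\emptyset$ (equivalently, $as_k(w)\neq rs_k(w)$ for every $w$). Your instinct for how to close it is exactly right and matches what the paper does later: the canonical longest $k$-zigzag is the subsequence of $k$-peaks and $k$-valleys (Proposition~\ref{P:zigzagging&peaks}), and since $n\ge k+1$ this subsequence has length at least $2$, so its orientation (peak first or valley first) is determined. That forces one of $as_k(w),rs_k(w)$ to equal $zs_k(w)$ and, via the block decomposition $\gamma_0,\gamma_1,\dots,\gamma_s$ used in the proof of Proposition~\ref{P:zigzagging&peaks}, the other to be strictly smaller. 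So once Section~2 is in hand, your missing step becomes routine; as a stand-alone proof of the lemma it is incomplete, but no more so than the paper's own paragraph.
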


Let us take a look at the $k=1$ case of the proof to get some ideas about our proof.
In this case, the zigzagging length of $w$ is equal to the number of its peaks and valleys, where $w_i$ is a peak (respectively a valley) if it is greater (respectively less) than its one or two neighbors. We see that $w_1$ and $w_n$ each is a peak or a valley. With a little thought, one sees that the probability that $w_i$ is a peak or a valley is $2/3$ when $1<i<n$. Now we see that $E_n(zs_1)=1+(n-2)\times \frac23+1=\frac{2n+2}3$, in agreement with \eqref{F:averagekzigzagging}. (The author learned this proof from Richard Stanley, who learned it from Miklos B\'ona. See Section 4 of \cite{S1}.)

Our proof is similar to this argument. We first define the $k$-peaks and $k$-valleys of a permutation, which are the original peaks and valleys when $k=1$. We prove that the $k$-zigzagging length of a permutation is equal to the number of its $k$-peaks and $k$-valleys. Then we count the probability that a number $j$ is a $k$-peak in a permutation. Finally, we prove formula \eqref{F:averagekzigzagging} which is equivalent to \eqref{F:averagekalternating}.

\section{$k$-peaks and $k$-valleys}
\begin{definition}
Let $w=w_1w_2\dotsm w_n\in\mathfrak{S}_n$ and $n>k\geq1$. We call a section $w_sw_{s+1}\dotsm w_t$ in $w$ a \emph{$k$-up} (respectively a \emph{$k$-down}) if $s<t$ and $w_t-w_s\geq k$ (respectively $w_s-w_t\geq k$).
We say a section $w_iw_{i+1}\dotsm w_j$ ($i<j$) of $w$ is \emph{$k$-ascending} if it satisfies the following:
\begin{enumerate}
\renewcommand*\labelenumi{[\theenumi]}
\item $w_i=\text{min}\{w_i,w_{i+1},\dots,w_j\}$, $w_j=\text{max}\{w_i,w_{i+1},\dotsc,w_j\}$;
\item $w_j-w_i\geq k$; i.e., $w_i\dotsm w_j$ is a $k$-up;
\item if $i\leq s<t\leq j$ then $w_s-w_t<k$; i.e., there is no $k$-down in $w_i\dotsm w_j$.
\end{enumerate}
 If moreover $w_i\dotsm w_j$ is not contained in another $k$-ascending section, we call it a \emph{maximal $k$-ascending} section. In this case, we call $w_i$ a \emph{$k$-valley} of $w$ and $w_j$ a \emph{$k$-peak} of $w$.
\end{definition}
Similarly, we define $k$-down, $k$-descending, and maximal $k$-descending. For a maximal $k$-descending section $w_i\dotsm w_j$ of $w$ we also call $w_i$ a $k$-peak of $w$ and $w_j$ a $k$-valley of $w$.

\begin{example}\label{E:1&n}
Let $w=w_1w_2\dotsm w_n\in\mathfrak{S}_n$. 
We see that if $1\leq j\leq k$, then the number $j$ is not a $k$-peak in $w$.
\end{example}

\begin{example}
Consider the permutation $w=214386759\in \mathfrak{S}_9$. We see that the number $2$ is not in a maximal $3$-ascending section or a maximal $3$-descending section. 
The sections $1438$ and $59$ are maximal $3$-ascending sections, while $8675$ is a maximal $3$-descending section. Finally, $1859$ is a longest $3$-zigzagging subsequence of $w$.
\end{example}

 This example suggests that a permutation can be viewed as a chain of alternating maximal $k$-ascending sections and maximal $k$-descending sections. The link points are those $k$-valleys and $k$-peaks. It is possible, however, that a beginning section or a ending section is not covered by this chain. Most importantly, we also see that the subsequence formed by the $k$-peaks and $k$-valleys is a longest $k$-zigzagging subsequence of $w$ (see Proposition \ref{P:zigzagging&peaks}). We will only need to count the total number of the $k$-peaks, because the total number of $k$-peaks of all permutations is equal to that of the $k$-valleys, which can be seen applying the swapping map $I$.

We have the following properties to prolong a $k$-ascending section. Using the swapping map $I$, one finds similar properties for a $k$-descending section.
\begin{lemma}\label{L:prolonging}
Let a section $w_i\dotsm w_j$ in $w=w_1\dotsm w_n$ be $k$-ascending.
\begin{enumerate}
\renewcommand*\labelenumi{(\theenumi)}
\item If there is a $t>j$ with $w_j<w_t$ and no $k$-down in $w_j\dotsm w_t$ then the $k$-ascending section $w_i\dotsm w_j$ can be prolonged from the right, i.e., there is a $j<t'\leq t$ such that $w_i\dotsm w_j\dotsm w_{t'}$ is $k$-ascending;
\item If there is a $s<i$ with $w_s<w_i$ and no $k$-down in $w_s\dotsm w_i$ then the $k$-ascending section $w_i\dotsm w_j$ can be prolonged from the left, i.e.,there is an $s\leq s'<i$ such that $w_{s'}\dotsm w_i\dotsm w_j$ is $k$-ascending.
\end{enumerate}
\end{lemma}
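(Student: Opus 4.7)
The plan is to give an explicit recipe for $t'$ in part (1), verify the three defining properties [1]--[3] of the $k$-ascending definition for the enlarged section, and then note that part (2) follows by the mirror argument, taking $s'$ to be any position in $[s,i)$ where $w_{s'}$ attains the minimum of $w_s,w_{s+1},\dotsc,w_i$. For part (1) I would let $t'\in(j,t]$ be any position at which $w_{t'}$ attains the maximum of $w_j,w_{j+1},\dotsc,w_t$; the strict inequality $w_j<w_t$ ensures $t'>j$, so this is a genuine prolongation.

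I would then check [1]--[3] for $w_iw_{i+1}\dotsm w_{t'}$. For [1], the max-assertion is immediate from $w_{t'}\geq w_j$ combined with the fact that $w_j$ is the maximum of $w_i\dotsm w_j$. For the min-assertion I argue by contradiction: if some $w_m<w_i$ with $j<m\leq t'$ existed, then $w_j-w_m>w_j-w_i\geq k$ would exhibit a $k$-down $w_jw_{j+1}\dotsm w_m$ sitting inside $w_j\dotsm w_t$, contradicting the hypothesis. Property [2] is a one-line consequence of $w_{t'}\geq w_j$ and the $k$-ascending inequality $w_j-w_i\geq k$ for the original section.

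The main work---and the only step with any real subtlety---is property [3]: ruling out any $k$-down in $w_i\dotsm w_{t'}$. Given a hypothetical $k$-down $w_pw_{p+1}\dotsm w_q$ with $i\leq p<q\leq t'$, I would split on the position of $j$ relative to $p,q$. The cases $q\leq j$ and $p\geq j$ are immediate contradictions with, respectively, the $k$-ascending hypothesis on $w_i\dotsm w_j$ and the no-$k$-down hypothesis on $w_j\dotsm w_t$. The delicate case is the straddling one $p<j<q$: here I use that $w_j$ is the maximum of $w_i\dotsm w_j$ to get $w_p\leq w_j$, so $w_j-w_q\geq w_p-w_q\geq k$ exhibits a $k$-down $w_j\dotsm w_q$ inside $w_j\dotsm w_t$, again contradicting the hypothesis. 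This ``push the left endpoint up to $w_j$'' trick is what makes the three-case split go through, and it is the only place where the maximality of $w_j$ in the original $k$-ascending section is actually needed.
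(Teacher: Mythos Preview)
Your proof is correct and follows exactly the paper's approach: the paper also takes $w_{t'}=\max\{w_j,w_{j+1},\dotsc,w_t\}$ and then simply asserts that ``it is easy to verify'' the three defining conditions, declaring part (2) completely analogous. You have merely spelled out that verification in full---including the three-case split for property~[3] and the ``push the left endpoint up to $w_j$'' trick in the straddling case---so your argument is a fleshed-out version of the paper's one-line proof rather than a different route.
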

\begin{proof}
For the first statement, take $w_{t'}=\text{max}\{w_j, w_{j+1},\dotsc,w_t\}$. It is easy to verify that $w_i\dotsm w_j\dotsm w_{t'}$ is a desired $k$-ascending section. The second statement is completely analogous.
\end{proof}

The following property says that a $k$-up contains a $k$-ascending section. There is a similar fact for a $k$-down.
\begin{lemma}\label{L:up2ascending}
Let $(w_i,w_j)$ be a $k$-up. Let $i\leq i'<j'\leq j$ such that $w_{i'}\dotsm w_{j'}$ is a shortest (i.e., $|i'-j'|$ is minimal) $k$-up.
Then $w_{i'}\dotsm w_{j'}$ is a $k$-ascending section.
\end{lemma}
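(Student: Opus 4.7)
The plan is to verify the three conditions [1], [2], [3] in the definition of $k$-ascending for the section $w_{i'}\dotsm w_{j'}$. Condition [2] is free: it holds by hypothesis since $w_{i'}\dotsm w_{j'}$ is itself a $k$-up. Conditions [1] and [3] will both be extracted from a single idea: the minimality of $j'-i'$ among all $k$-ups inside $w_i\dotsm w_j$ forbids the existence of any strictly shorter $k$-up in that window.

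For [1], I would assume towards a contradiction that some index $r$ with $i'\leq r\leq j'$ satisfies $w_r<w_{i'}$. Then $w_{j'}-w_r > w_{j'}-w_{i'}\geq k$, so $(w_r,w_{j'})$ is a $k$-up. One checks that the edge cases $r=i'$ and $r=j'$ are immediately contradictory, so $i'<r<j'$, making $(w_r,w_{j'})$ a $k$-up of length $j'-r<j'-i'$ inside $w_i\dotsm w_j$, contradicting minimality. Symmetrically, if $w_r>w_{j'}$ for some $i'\leq r\leq j'$, then $(w_{i'},w_r)$ is a strictly shorter $k$-up, again a contradiction.

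For [3], I would again argue by contradiction: suppose $(w_s,w_t)$ is a $k$-down with $i'\leq s<t\leq j'$, so $w_s-w_t\geq k$. The key observation is to add the two inequalities
\begin{equation*}
(w_{j'}-w_{i'})+(w_s-w_t)\geq 2k,
\end{equation*}
and then regroup them as $(w_s-w_{i'})+(w_{j'}-w_t)\geq 2k$. Hence at least one of the two nonnegative quantities $w_s-w_{i'}$ or $w_{j'}-w_t$ is $\geq k$. In the first case $(w_{i'},w_s)$ is a $k$-up of length $s-i'<j'-i'$; in the second case $(w_t,w_{j'})$ is a $k$-up of length $j'-t<j'-i'$. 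Either way minimality is violated, so no such $k$-down exists.

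The routine bookkeeping is just the edge-case check (ruling out $r\in\{i',j'\}$ and $s=i'$, $t=j'$), which uses only $k\geq 1$ together with the fact that $w_{j'}-w_{i'}\geq k$. The one slightly non-obvious ingredient, and the main step I would highlight, is the algebraic rearrangement used for [3], since it bypasses any awkward case analysis on the relative position of $s,t$ inside $[i',j']$ and reduces everything uniformly to the minimality hypothesis.
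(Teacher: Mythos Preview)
Your argument is correct and is exactly the kind of direct verification by definition that the paper has in mind; the paper's own proof consists of the single sentence ``This can easily be verified by definition.'' One small simplification: once [1] is established, for [3] you can skip the regrouping and note directly that $w_{j'}\geq w_s$ gives $w_{j'}-w_t\geq w_s-w_t\geq k$, so $(w_t,w_{j'})$ is already a strictly shorter $k$-up (with $t<j'$ forced since $w_{j'}$ is the maximum).
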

\begin{proof}
This can easily be verified by definition.
\end{proof}

\begin{lemma}\label{L:intersection}
The intersection of a maximal $k$-ascending section and a maximal $k$-descending section is empty or a one-element set. Two distinct maximal $k$-ascending sections do not intersect.
\end{lemma}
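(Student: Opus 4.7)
The plan is to treat the two claims separately, both relying on the structural properties baked into the definitions: in a $k$-ascending section $w_i\dotsm w_j$ the endpoints realize the min and the max, and there is no $k$-down inside; in a $k$-descending section the left endpoint is the max, the right is the min, and there is no $k$-up inside.

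For the first claim, let $A=[i,j]$ be a maximal $k$-ascending section and $D=[i',j']$ a maximal $k$-descending section, and suppose for contradiction that $|A\cap D|\ge 2$. Since sections are intervals, so is the intersection. I would split on the relative order of the left endpoints; by symmetry assume $i\le i'$. If $j>j'$ then $D\subseteq A$, and the pair $(w_{i'},w_{j'})$ with $i'<j'$ is a $k$-down by property [2] of $D$, sitting inside $A$ and violating property [3] of $A$. Otherwise $j\le j'$, so $A\cap D=[i',j]$ with $i'<j$; then $i'$ and $j$ both lie in $A\cap D$, and combining $w_j=\max A\ge w_{i'}$ with $w_{i'}=\max D\ge w_j$ gives $w_j=w_{i'}$, which forces $i'=j$ because the values are distinct in a permutation---contradicting $i'<j$. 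The case $i>i'$ is the mirror image, using the min instead of the max.

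For the second claim, the key tool is Lemma~\ref{L:prolonging}(2). Let $A_1=[i_1,j_1]$ and $A_2=[i_2,j_2]$ be two distinct maximal $k$-ascending sections with non-empty intersection; WLOG $i_1\le i_2$. Equality $i_1=i_2$ would force one of them to strictly contain the other, contradicting its maximality, so $i_1<i_2$, and the intersection hypothesis gives $i_2\le j_1$. Both $i_1,i_2$ lie in $A_1$, so $w_{i_1}<w_{i_2}$ (since $w_{i_1}=\min A_1$ and values are distinct), and $[i_1,i_2]\subseteq A_1$ contains no $k$-down by property [3] of $A_1$. Applying Lemma~\ref{L:prolonging}(2) to $A_2$ with $s=i_1$ then produces a $k$-ascending section strictly containing $A_2$, contradicting the maximality of $A_2$.

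The one spot that requires attention, rather than being a genuine obstacle, is the bookkeeping in the first claim: one must invoke the right extremum (max versus min) when the two sections overlap ``crosswise'', and switch to the forbidden-monotonicity argument (no $k$-down in $A$, no $k$-up in $D$) exactly when one section is contained in the other. The second claim reduces, once one spots the correct parameter $s=i_1$, to a single application of the prolonging lemma.
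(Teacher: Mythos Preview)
Your proof is correct and follows essentially the same approach as the paper's own (very terse) argument: the first statement is handled via the extremal properties of the endpoints together with the absence of $k$-downs/$k$-ups inside, and the second statement is reduced to a single application of Lemma~\ref{L:prolonging}. Your write-up simply makes explicit the case split (containment versus crosswise overlap) that the paper leaves to the reader.
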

\begin{proof}
The first statement is easy by considering the maximum and minimum of the two sections.

The second statement follows from Lemma \ref{L:prolonging}.
\end{proof}

The following result together with Lemma \ref{L:up2ascending} tells us that every permutation $w$ is covered by its maximal $k$-ascending sections and maximal $k$-descending sections, except possibly a beginning section and/or an ending section of $w$.
\begin{lemma}
 Let $\mathbf{\gamma}=w_iw_{i+1}\dotsm w_j$ and $\mathbf{\delta}=w_{i'}w_{i'+1}\dotsm w_{j'}$ each be a maximal $k$-ascending section or a maximal $k$-descending section.
  If $j<i'$ then there is a $k$-up or a $k$-down in $w_jw_{j+1}\dotsm w_{i'}$.
\end{lemma}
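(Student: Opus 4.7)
The plan is to argue by contradiction. Suppose $w_jw_{j+1}\dotsm w_{i'}$ contains neither a $k$-up nor a $k$-down; then $|w_s-w_t|<k$ for every $j\leq s<t\leq i'$. I will show that this forces one of $\gamma$ or $\delta$ to admit a prolongation, contradicting its maximality. Since the swapping map $I$ interchanges $k$-ascending sections with $k$-descending ones, it suffices to treat the case when $\gamma$ is $k$-ascending; the case $\gamma$ is $k$-descending then follows by applying $I$ to $w$.

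Assume therefore that $\gamma$ is $k$-ascending, so $w_j=\max\gamma$ and $w_j\geq w_i+k$. Suppose first that $\delta$ is also $k$-ascending, so $w_{i'}=\min\delta$ and $w_{j'}\geq w_{i'}+k$. Combining $w_j<w_{i'}+k$ (from the standing bound) with $w_{j'}\geq w_{i'}+k$ yields $w_{j'}>w_j$. I then verify that $w_j\dotsm w_{j'}$ contains no $k$-down: within $w_j\dotsm w_{i'}$ this holds by assumption; within $w_{i'}\dotsm w_{j'}=\delta$ it holds because $\delta$ is $k$-ascending; and for a crossing pair $j\leq s\leq i'\leq t\leq j'$ we have $w_s<w_{i'}+k\leq w_t+k$, using $w_{i'}=\min\delta$. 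Lemma~\ref{L:prolonging}(1) applied with $t=j'$ then prolongs $\gamma$ to the right, contradicting its maximality.

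If instead $\delta$ is $k$-descending, then $w_{i'}=\max\delta$, and the entries of a permutation being distinct gives $w_j\neq w_{i'}$. If $w_j<w_{i'}$, then Lemma~\ref{L:prolonging}(1) applied with $t=i'$ prolongs $\gamma$, using that $w_j\dotsm w_{i'}$ contains no $k$-down. If $w_j>w_{i'}$, then the $k$-descending analog of Lemma~\ref{L:prolonging}(2) applied with $s=j$ prolongs $\delta$ from the left, using that $w_j\dotsm w_{i'}$ contains no $k$-up. In either situation we contradict the maximality of $\gamma$ or of $\delta$.

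I expect the main obstacle to be merely bookkeeping across the four sub-cases; the one genuinely non-trivial verification is the crossing-pair estimate in the ascending/ascending sub-case, which depends crucially on $w_{i'}=\min\delta$ to turn the local bound on $w_s$ into a bound against the far entry $w_t\in\delta$.
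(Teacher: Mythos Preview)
Your proof is correct and follows essentially the same approach as the paper: argue by contradiction and invoke Lemma~\ref{L:prolonging} to prolong one of the two maximal sections. The only cosmetic differences are that you reduce to $\gamma$ ascending via the swapping map $I$ and, in the ascending/ascending case, prolong $\gamma$ to the right (using $t=j'$) rather than prolonging $\delta$ to the left as the paper does; your explicit crossing-pair verification is in fact more detailed than the paper's terse treatment.
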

\begin{proof}
If there is no $k$-up or $k$-down in $w_j\dotsm w_{i'}$, Lemma \ref{L:prolonging} will always allow us to prolong one of the two sections $\mathbf{\gamma}$ and $\mathbf{\delta}$, a contradiction to the maximality of $\mathbf{\gamma}$ and $\mathbf{\delta}$.

For example, let us consider the case that both $\mathbf{\gamma}$ and $\mathbf{\delta}$ are maximal $k$-ascending (and there is no $k$-down or $k$-up in $w_j\dotsm w_{i'}$). Then $w_i<w_{i'}$. (Otherwise, $w_j\dotsm w_{i'}$ is already a $k$-down as $w_j-w_{i'}> w_j-w_{i}\geq k$.) Moreover, there is no $k$-down in $w_i\dotsm w_{i'}$. Thus $w_{i'}\dotsm w_{j'}$ can be prolonged from the left by Lemma \ref{L:prolonging}.
\end{proof}
\begin{proposition}\label{P:zigzagging&peaks}
The subsequence of a permutation formed by the $k$-peaks and $k$-valleys is a longest $k$-zigzagging subsequence. Thus the average $k$-zigzagging length of permutations is two times the average number of $k$-peaks of permutations.
\end{proposition}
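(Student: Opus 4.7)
My plan is to establish a chain structure on the maximal $k$-ascending and $k$-descending sections of $w$, and to read off both the peak-and-valley subsequence and the zigzagging upper bound from it. Listing the maximal sections in positional order as $\sigma_1,\dotsc,\sigma_m$, I claim their types strictly alternate (ascending versus descending) and each consecutive pair $\sigma_l,\sigma_{l+1}$ shares exactly one endpoint. Lemma \ref{L:intersection} already forbids two same-type maximal sections from overlapping and caps different-type intersections at a single point, so both claims reduce to ruling out a strict positional gap $j<i'$ between $\sigma_l$ and $\sigma_{l+1}$. To do so I would apply the unnamed lemma immediately before the proposition: a strict gap would force a $k$-up or $k$-down inside $w_jw_{j+1}\dotsm w_{i'}$, and Lemma \ref{L:up2ascending} would promote it to a $k$-ascending or $k$-descending section contained in some maximal section strictly between $\sigma_l$ and $\sigma_{l+1}$, contradicting consecutiveness.

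Granting the chain, the $k$-peaks and $k$-valleys of $w$ are precisely the $m+1$ distinct endpoints of $\sigma_1,\dotsc,\sigma_m$, alternating peak-valley-$\dotsm$ along $w$; consecutive endpoints bound one maximal section and so differ by at least $k$ in value, so the peak-and-valley subsequence is $k$-zigzagging of length $m+1$. For the matching upper bound on any $k$-zigzagging subsequence $w_{i_1}w_{i_2}\dotsm w_{i_s}$, its $s-1$ consecutive pairs alternate between $k$-ups and $k$-downs, and Lemma \ref{L:up2ascending} assigns each pair a maximal section of the corresponding type. I would then verify injectivity of this assignment: if two $k$-up pairs landed in the same maximal $k$-ascending $\gamma$, the positions of $\gamma$ would have to span across an intermediate $k$-down pair forced by the zigzag alternation, contradicting the requirement that $\gamma$ contains no $k$-down, and symmetrically for descendings. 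Thus $s-1\leq m$, so $s\leq m+1$, with the peak-and-valley subsequence realizing equality. The closing ``two times'' assertion follows because the swapping map $I$ interchanges $k$-peaks and $k$-valleys, so their averages over $\mathfrak{S}_n$ agree, and since $zs_k(w)$ equals the number of $k$-peaks plus $k$-valleys of $w$, we conclude $E_n(zs_k)=2E_n(\#\,k\text{-peaks})$.

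The main obstacle I anticipate is the chain-structure step, specifically forcing consecutive maximal sections to share their common endpoint rather than leave a gap; without this, the peak-and-valley subsequence would not obviously alternate and the counting $p+v=m+1$ would break down. The injectivity step in the upper bound should then follow fairly directly from Lemma \ref{L:up2ascending} together with the ``no $k$-down inside'' property of $k$-ascending sections, though some care is needed when the two offending pairs are not adjacent within the zigzag sequence, since one must track how $\gamma$'s positional interval swallows the intermediate pair.
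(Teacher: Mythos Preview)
Your proposal is correct and follows essentially the same route as the paper: establish the alternating chain of maximal $k$-ascending and $k$-descending sections, identify its endpoints as the $k$-peaks and $k$-valleys, and bound any $k$-zigzagging subsequence by the chain length. The only cosmetic difference is in the upper-bound step, where the paper argues section by section (at most two elements from each interior $\gamma_r$, with adjacent sections forced to share their link point) while you injectively assign each zigzag edge to a maximal section via Lemma~\ref{L:up2ascending}; both packagings give the same bound.
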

\begin{proof}
Let $w_{i_1}w_{i_2}\dotsm w_{i_s}$ be the subsequence formed by the $k$-peaks and $k$-valleys of $w$. Let $\gamma_r=w_{i_{r}}\dotsm w_{i_{r+1}}$ ($r=1,2,\dotsc,s-1$). We see that $w$ is a union of these $s+1$ sections $\gamma_0,\gamma_1, \dotsc, \gamma_{s-1},\gamma_s$, where $\gamma_1,\dotsm, \gamma_{s-1}$ is an alternating sequence of maximal $k$-ascending sections and maximal $k$-descending sections. (The (beginning) section of $w$, $\gamma_0=w_1\dotsm w_{i_1}$, is a single element if $i_1=1$. The (ending) section of $w$, $\gamma_s=w_{i_s}\dotsm w_n$, is a single element if $i_s=n$.) To form a $k$-zigzagging subsequence of $w$, one can take at most one element from each of $\gamma_0$ and $\gamma_s$. One can take at most two elements from each of $\gamma_1,\dotsm, \gamma_{s-1}$; but to take two elements from each of $\gamma_t,\gamma_{t+1}$, one has to take the link point $w_{i_{t+1}}$. Thus we see that taking the $k$-peaks and $k$-valleys is one way to have the maximum length of $k$-zigzagging subsequence.

The second statement now follows because the total number of $k$-peaks of all permutations is equal to that of $k$-valleys.
\end{proof}

\section{A characterization of $k$-peaks and the proof of the theorem}
We will need the following characterization of $k$-peaks.
\begin{proposition}\label{P:kpeakcharacterization}
Let $w=w_1\dotsm w_n\in \mathfrak{S}_n$, $i\in[1,n]$ and $1\leq k\leq n-1$. Then $w_i$ is a $k$-peak if and only if it satisfies the following two properties.
\begin{enumerate}
\renewcommand*\labelenumi{(\theenumi)}
\item If there is an $s>i$ with $w_s>w_i$, then there is a $k$-down $w_i\dotsm w_j$ in $w_i\dotsm w_s$.
\item If there is an $s<i$ with $w_s>w_i$, then there is a $k$-up $w_j\dotsm w_i$ in $w_s\dotsm w_i$.
\end{enumerate}
\end{proposition}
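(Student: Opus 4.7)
The plan is to prove the two implications separately; throughout I only argue condition~(1), since~(2) is handled by the same reasoning with the roles of ``left of $i$'' and ``right of $i$'' exchanged and $k$-ascending swapped with $k$-descending. The main tools are Lemma~\ref{L:prolonging} and Lemma~\ref{L:intersection}.

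\emph{Forward direction.} Assume $w_i$ is a $k$-peak, so $w_i$ is either the left endpoint of a maximal $k$-descending section or the right endpoint of a maximal $k$-ascending section (possibly both). Fix $s>i$ with $w_s>w_i$. In the first case, if $w_i\dotsm w_q$ is that maximal $k$-descending section, then $w_q\le w_i-k$, and since every entry of $w_i\dotsm w_q$ is at most $w_i<w_s$ we must have $q<s$, so $w_i\dotsm w_q$ is already the required $k$-down. In the second case I argue by contradiction: suppose no $k$-down in $w_i\dotsm w_s$ starts at $w_i$, and let $r\le s$ be the smallest index $>i$ with $w_r>w_i$. Then the assumption forces every $w_t$ with $i<t<r$ into the narrow window $(w_i-k,w_i)$, while $w_r>w_i$. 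A short case check on the endpoints then rules out any $k$-down inside $w_i\dotsm w_r$: a drop of size $\ge k$ starting at $w_i$ is forbidden by assumption, a drop starting at $w_r$ would have to continue past $r$, and a drop starting at some $w_a\in(w_i-k,w_i)$ would end strictly below $w_i-k$, outside the window. With no $k$-down present, Lemma~\ref{L:prolonging}(1) prolongs the maximal $k$-ascending section, contradicting its maximality.

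\emph{Backward direction.} Assume (1) and (2) and split into three cases by the location of entries exceeding $w_i$. Case~I: some $s>i$ has $w_s>w_i$. Condition~(1) supplies a minimal index $j^*>i$ with $w_{j^*}\le w_i-k$, and the same window argument constrains $w_t\in(w_i-k,w_i)$ for $i<t<j^*$, from which the defining properties of a $k$-descending section are immediate for $w_i\dotsm w_{j^*}$. Embed this section in the unique maximal $k$-descending section containing it (Lemma~\ref{L:intersection}), say $w_{s^\dagger}\dotsm w_{t^\dagger}$; if $s^\dagger<i$ then $w_{s^\dagger}>w_i$, and~(2) yields a $k$-up in $w_{s^\dagger}\dotsm w_i$, violating the no-$k$-up clause. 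Hence $s^\dagger=i$ and $w_i$ is a $k$-peak. Case~II (no $s>i$ has $w_s>w_i$ but some $s'<i$ does) is the mirror image, using~(2) to build a maximal $k$-ascending section ending at $w_i$. Case~III is $w_i=n$, where both (1) and (2) are vacuous; the hypothesis $n\ge k+1$ still guarantees some $j\ne i$ with $w_j\le n-k$, and applying the window construction on whichever side of $i$ this $j$ lies delivers the required maximal section.

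The main obstacle is the forward direction in the sub-case where $w_i$ is only the right endpoint of a maximal $k$-ascending section. Lemma~\ref{L:prolonging} guarantees only that some $k$-down exists inside $w_i\dotsm w_s$, whereas~(1) demands one starting at $w_i$; the narrow-window case analysis is precisely what upgrades the former to the latter, and the same analysis drives the constructions in Cases~I and~II of the backward direction.
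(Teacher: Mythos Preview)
Your proof is correct and follows essentially the same route as the paper's. Both directions hinge on the same two ideas: for the forward implication, one shows that the absence of a $k$-down starting at $w_i$ would allow Lemma~\ref{L:prolonging} to extend the adjacent maximal section, and for the converse one builds a $k$-ascending or $k$-descending section abutting $w_i$ (via a ``closest'' or ``minimal'' index), embeds it in a maximal one, and then uses the other property to pin the endpoint to $i$. The organizational differences---your explicit treatment of both sub-cases in the forward direction, your three-way case split in the backward direction, and your choice to build a $k$-descending section to the right in Case~I where the paper builds a $k$-ascending section to the left---are all symmetric variants of the paper's argument rather than a genuinely different strategy.
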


\begin{remark}
 (1) Note that if $w_i=n$ than it satisfies these two properties for all positive integers $k$. Therefore the number $n$ appears as a $k$-peak for all $1\leq k\leq n-1$. (2) By this proposition, a $k$-peak is also a $k'$-peak if $1\leq k'\leq k\leq n-1$.
\end{remark}

\begin{proof}[Proof of Proposition \ref{P:kpeakcharacterization}]
Proof of ``only if": Let $w_i$ be a $k$-peak. Then it is the ending of a maximal $k$-ascending section and/or the beginning of a $k$-descending section.
Let us consider the case that it is the ending of a maximal $k$-ascending section $w_{i'}\dotsm w_i$; the other case can be done similarly.

First $w_i$ satisfies the second property. Now assume that it does not satisfy the first property. Then we can take the minimum $s$ such that $s>i$, $w_s>w_i$ and there is no $k$-down $w_i\dotsm w_j$ in $w_i\dotsm w_s$. Then $w_i>w_{s'}$ for $i<s'<s$ by the minimality of $s$. Therefore there is no $k$-down in $w_i\dotsm w_s$. (Because if $w_{j'}\dotsm w_j$ is a $k$-down in $w_i\dotsm w_s$, then so is $w_i\dotsm w_j$ as $w_i>w_{j'}$). By Lemma \ref{L:prolonging} we can prolong the maximal $k$-ascending section $w_{i'}\dotsm w_i$ from the right, a contradiction.

Proof of ``if": First there is at least one $k$-down $w_i\dotsm w_j$ or one $k$-up $w_j\dotsm w_i$ (no matter whether $w_i$ equals $n$ or not). Let us prove the case when there is a $k$-up $w_j\dotsm w_i$; the other case is proved similarly. Let $w_t$ be the closest element to $w_i$ (so $|i-t|$ is minimum) such that $w_t\dotsm w_i$ is a $k$-up. We show in the following that $w_t\dotsm w_i$ is $k$-ascending.

 First, $w_t$ is the minimum in $\{w_t,\dotsc, w_i\}$ by the choice of it. Also $w_i$ is the maximum in $\{w_t,\dotsc, w_i\}$. Otherwise, let $w_s$ in $w_t\dotsm w_i$ be greater than $w_i$; thus there is a $k$-up $w_{s'}\dotsm w_i$ in $w_s\dotsm w_i$. This $w_{s'}$ is closer to $w_i$ than $w_t$ is, contradicting to the choice of $w_t$. Second, $w_t\dotsm w_i$ is known to be a $k$-up. Third, there is no $k$-down in $w_t\dotsm w_i$. Otherwise, let $w_r\dotsm w_s$ be a $k$-down in $w_t\dotsm w_i$. Then $w_i-w_s>w_r-w_s\geq k$ and thus $w_s\dotsm w_i$ is a $k$-up and $w_s$ is closer to $w_i$ than $w_t$ is, a contradiction.

Now as $w_t\dotsm w_i$ is a $k$-ascending section; it is thus contained in a maximal $k$-ascending section $w_{t'}\dotsm w_{i'}$.  If $i'>i$, then $w_{i'}>w_i$,  and thus there is a $k$-down $w_i\dotsm w_r$ in $w_i\dotsm w_{i'}$ (by the first property), which contradicts the fact that $w_{t'}\dotsm w_{i'}$ is a (maximal) $k$-ascending section. Therefore $i'=i$ and hence $w_i$ is a $k$-peak, as desired.
\end{proof}

Now we apply Proposition \ref{P:kpeakcharacterization} to find the probability that a number $j$ appears as a $k$-peak in a permutation in $\mathfrak{S}_n$.
 For instance, by this proposition, we know that the probability of $n$ being a $k$-peak is $1$.
\begin{proposition}\label{P:pnki}
Let $1\leq j\leq n$ and $1\leq k\leq n-1$.
 Let $p_{n,k}(j)$ be the probability that $j$ is a $k$-peak of a randomly selected permutation in $\mathfrak{S}_n$. We have
\begin{displaymath}
   p_{n,k}(j) = \left\{
     \begin{array}{lr}
       0 & \text{ if } j\leq k\\
       \frac{(j-k)(j-k+1)}{(n-k)(n-k-1)} & \text{ if } j>k.
     \end{array}
   \right.
\end{displaymath}
\end{proposition}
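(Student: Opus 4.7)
The plan is to build the probability from the characterization in Proposition \ref{P:kpeakcharacterization} rather than counting permutations directly. The case $j\leq k$ is already settled by Example \ref{E:1&n}, so suppose $j>k$. Let $w_i=j$. A $k$-down of the form $w_i\dotsm w_{j'}$ is exactly a position $j'>i$ with $w_{j'}\leq j-k$, and similarly for $k$-ups ending at $w_i$. Thus the two conditions in Proposition \ref{P:kpeakcharacterization} refer only to values lying in
\[
A\cup B\quad\text{with}\quad A=\{j,j+1,\dotsc,n\},\quad B=\{1,2,\dotsc,j-k\}.
\]
Call the elements of $A\setminus\{j\}$ high and those of $B$ low. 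There are $n-j$ high values, $j-k$ low values, and one copy of $j$ itself, giving $n-k+1$ ``relevant'' values in total.

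The key reduction I would carry out is the following. Look at the subsequence of $w$ obtained by deleting every value in $\{j-k+1,\dotsc,j-1\}$; call this the \emph{restricted subsequence}. I claim that $w_i=j$ is a $k$-peak if and only if, in the restricted subsequence, both neighbors of $j$ (whenever they exist) are low. For the equivalence, note that condition (1) fails exactly when there is some $s>i$ with $w_s>j$ such that no $w_{j'}$ with $i<j'\leq s$ satisfies $w_{j'}\leq j-k$; taking the smallest such $s$ shows this is equivalent to the first element of the restricted subsequence after position $i$ being high rather than low. Condition (2) translates symmetrically. I would write this step out carefully, as it is the main substantive point; once it is in place, the rest is a counting exercise.

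Next, I would invoke symmetry: the restriction of a uniformly random permutation of $[n]$ to the value set $A\cup B$ yields a uniformly random linear arrangement of its $n-k+1$ elements. Hence the probability that $j$ is a $k$-peak equals the probability that, in a uniform arrangement of one token labelled $j$ together with $n-j$ H-tokens and $j-k$ L-tokens, the neighbors of $j$ in the arrangement are both L whenever they exist.

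Finally, I would compute this probability by conditioning on the position $p$ of $j$ among the $n-k+1$ tokens. When $p$ is an endpoint (probability $2/(n-k+1)$) one needs a single L in a specified spot, contributing $(j-k)/(n-k)$; when $p$ is interior (probability $(n-k-1)/(n-k+1)$) one needs L on both sides, contributing $(j-k)(j-k-1)/((n-k)(n-k-1))$. Combining these yields the closed form claimed in the proposition. The only real obstacle is the reduction to the restricted subsequence; the rest is bookkeeping.
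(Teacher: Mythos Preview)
Your proof is correct and follows essentially the same route as the paper: both reduce via Proposition~\ref{P:kpeakcharacterization} to the relative order of $j$, the ``low'' values $\{1,\dotsc,j-k\}$, and the ``high'' values $\{j+1,\dotsc,n\}$, observing that $j$ is a $k$-peak exactly when no high value sits adjacent to $j$ in this restricted arrangement (the middle values $\{j-k+1,\dotsc,j-1\}$ being irrelevant). The only cosmetic difference is in the final count---the paper inserts the high values one at a time into the row of low values and $j$, obtaining a telescoping product, whereas you condition on the position of $j$ among the $n-k+1$ relevant tokens---and both computations give $(j-k)(j-k+1)/\bigl((n-k)(n-k+1)\bigr)$.
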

\begin{proof}
The case $j\leq k$ is known by Example \ref{E:1&n} or by Proposition \ref{P:kpeakcharacterization}.

Let us consider the case $j>k$. We partition the set $[1,n]-\{j\}$ into three subsets:
\begin{align*}
&A=\{l:1\leq l\leq j-k\}\\
&B=\{l:j-k+1\leq l\leq j-1\}\\
&C=\{l:j+1\leq l\leq n\}.
\end{align*}
To form a permutation, let us first arrange $A\cup\{j\}$ on a row $a_1a_2\dotsm a_{j-k+1}$, then we insert the elements from the set $B\cup C$ one by one into this row. We first insert the number $j+1$ into $a_1a_2\dotsm a_{j-k+1}$. There are $j-k+2$ positions to put: put it to the left of $a_1$, put it between $a_1$ and $a_2$, put it between $a_2$ and $a_3$, on and on, and put it to the right of $a_{j-k+1}$. We form a new row with $j+k+2$ elements. Then we put the number $j+2$ into this new row, and there are $j-k+3$ positions to do this. Keep doing this until we exhaust all elements in $C$; then do elements from $B$.

We see that all permutations can be obtained this way. But to make $j$ a $k$-peak, it is sufficient and necessary that we do not put any element from $C$ next to $j$. This is because Proposition \ref{P:kpeakcharacterization} tells us that between $j$ and an element from $C$ there should be at least an element from $A$. The insertion of elements from $B$ will not change the property that $j$ is a $k$-peak or not.

 Therefore when first adding $j+1$, there are $j-k$ \textit{right} positions out of the $j-k+2$ positions to put it.  When adding $j+2$, there are $j-k+1$ \textit{right} ways out of the $j-k+3$ ways to do so. So on and so forth, until when adding $n$, there are $n-k-1$ \textit{right} ways out of the $n-k+1$ ways to do so. Therefore the probability of $j$ being a $k$-peak is as follows:
\begin{align*}
p_{n,k}(j)&=\frac{j-k}{j-k+2}\times\frac{j-k+1}{j-k+3}\times\dotsm \times\frac{n-k-1}{n-k+1}\\
          &=\frac{(j-k)(j-k+1)}{(n-k)(n-k+1)}.
\end{align*}
\end{proof}

\begin{proof}[Proof of Theorem \ref{T:main}]
As the probability of $j$ being a $k$-peak in a permutation $w\in \mathfrak{S}_n$ is $p_{n,k}(j)$,
the average number of $k$-peaks of a permutations in $\mathfrak{S}_n$ is $\sum_{j=1}^n p_{n,k}(j)$. 
By Propositions \ref{P:zigzagging&peaks} and \ref{P:pnki}, we have
\begin{align*}
E_n(zs_k)&=2\sum_{j=1}^n p_{n,k}(j)\\
         &=2\sum_{j=k+1}^n\frac{(j-k)(j-k+1)}{(n-k)(n-k+1)}\\
         &=\frac{2(n-k)+4}{3}.
\end{align*}
This is formula \eqref{F:averagekzigzagging}, which is equivalent to \eqref{F:averagekalternating} by Lemma \ref{L:alternatingSimilar2zigzagging}.
\end{proof}

\centerline{\bf Acknowledgments}
The author gratefully acknowledges Professor Richard Stanley for his comprehensive help on this work.
 He also thanks M.I.T. for hospitality and the China Scholarship Council for the support during the work.
This work is partially supported by NSFC grant  \#11271138.
\bibliographystyle{amsalpha}

\end{document}